\definecolor{pantone312}{HTML}{009DD1}
\definecolor{darkgreen}{rgb}{0,0.55,0}
\newcommand{\grad}{\nabla}
\newcommand{\laplace}{\Delta}
\renewcommand{\div}{\grad\cdot}
\newcommand{\E}{\mathbbm{E}}
\newcommand{\R}{\mathbbm{R}}
\newcommand{\on}{\omega^{\nu}}
\newcommand{\un}{u^{\nu}}
\newcommand{\dd}{{\rm d}}
\newcommand{\dx}{\,\dd x}
\newcommand{\dy}{\,\dd y}
\newcommand{\dt}{\, \dd t}
\def\XXint#1#2#3{{\setbox0=\hbox{$#1{#2#3}{\int}$ }
\vcenter{\hbox{$#2#3$ }}\kern-.59\wd0}}
\newtheorem{theorem}{Theorem}
\newtheorem{lemma}{Lemma}
\begin{document}
\phantom{ }
\vspace{4em}

\begin{flushleft}
{\large \bf A note on the vanishing viscosity limit in the Yudovich class}\\[2em]
{\normalsize \bf Christian Seis}\\[1em]

{\bf Abstract:} We consider the inviscid limit for the two-dimensional Navier--Stokes equations in the class of integrable and bounded vorticity fields. It is expected that  the difference  between the Navier--Stokes and Euler velocity fields vanishes in   $L^2$ with an order proportional to the square root of the viscosity constant $\nu$. Here, we provide an order $\left(\nu/|\log\nu|\right)^{\frac12\exp(-Ct)}$ bound, which slightly improves upon earlier results by Chemin~\cite{Chemin96}.\\[1em]
{\em 2010 Mathematical Subject Classification:} Primary 35Q30; Secondary 76D09.
\end{flushleft}

\vspace{2em}

\section{Introduction}
The convergence of solutions of the Navier--Stokes equations towards solutions of the Euler equations in the limit of vanishing viscosity is a topic of ongoing research for many years. Most of the progress has been made in the two-dimensional full space, in which both vortex stretching and boundary effects are absent. Configurations in which the vorticity field is non-smooth are of particular interest, as these include the important examples of vortex patches. In the present paper, we study the inviscid limit for integrable and bounded vorticity fields.

To be more specific, we are interested in the rate of $L^2$ convergence of the Navier--Stokes velocity fields towards the Euler velocity fields. To the best of our knowledge, the best estimate available in the literature is due to Chemin  \cite{Chemin96}, who provides an $O(\nu^{\frac12\exp(-Ct)})$ bound on the velocity difference. In the present paper, we slightly improve this result  by a logarithm,
\begin{equation}\label{21}
\|\un(t)-u(t)\|_{L^2} =O\left(\left(\frac{\nu}{|\log{\nu}|}\right)^{\frac12\exp(-Ct)}\right),
\end{equation}
as $\nu\ll1$.  Moreover, for small times $t\ll 1/|\log \nu|$, we obtain
\begin{equation}\label{7}
\|\un(t)-u(t)\|_{L^2} =O( \sqrt{\nu}).
\end{equation}

Better convergence results are available in the literature under additional regularity assumptions. For instance, in \cite{ConstantinWu95,ConstantinWu96}, Constantin and Wu obtained $O(\sqrt{\nu})$ convergence \emph{globally} in time under additional gradient bounds on the velocity field. Such bounds  are true, for instance,  for vortex patches with smooth boundaries. For these particular solutions, however, better rates can be obtained. Indeed, Abidi and Danchin \cite{AbidiDanchin04} established an $O(\nu^{\frac34})$ estimate for vortex patches with smooth boundaries and showed the optimality of this convergence order. These results were further generalized and extended (to higher order Sobolev velocity fields) by Masmoudi \cite{Masmoudi07}. We also mention the $L^{\infty}$ bounds by Cozzi in the case of bounded but not necessarily decaying velocity fields \cite{Cozzi09,Cozzi14}.

We will work on the Navier--Stokes and Euler equations in vorticity formulations. The (scalar) vorticity fields are computed as the rotations of the velocity vectors and are denoted by $\on$ in the case of the Navier--Stokes  and $\omega$ in the case of the Euler equations. The Navier--Stokes equation in vorticity formulation is the advection-diffusion equation
\begin{equation}\label{1}
\partial_t \omega^{\nu} + u^{\nu}\cdot \grad \omega^{\nu} = \nu \laplace \on,
\end{equation}
which reduces to the Euler vorticity equation, a simple advection equation,
\begin{equation}\label{2}
\partial_t \omega + u\cdot\grad \omega= 0,
\end{equation}
in the inviscid limit when $\nu\to 0$. The velocity vector fields can be  reconstructed from the vorticities via the Biot--Savart law
\[
\un = K\ast \on,\quad u = K\ast \omega, \quad \mbox{where } K(x) = \frac{1}{2\pi}\frac{x^{\perp}}{|x|^2},
\]
and   $x^{\perp} $ is the counterclockwise rotation by 90 degrees of a point  $x $ in the plane. We note for completeness that the velocity fields are divergence-free by construction, thus $\div \un=\div u=0$,
which is the incompressibility assumption on the fluid. 

Both evolution equations have to be equipped with an initial condition. In the following, we assume that the initial vorticities are identical, integrable and bounded, that is,
\begin{equation}\label{6}
\on(0)  = \omega(0) = \omega_0\in L^1(\R^2)\cap L^{\infty}(\R^2).
\end{equation}
These assumptions are retained by the evolution, in the sense that for any time $t$,
\begin{gather}
\|\on(t)\|_{  L^1} \le \|\omega_0\|_{L^1} ,\quad\|\on(t)\|_{ L^{\infty}} \le \|\omega_0\|_{L^{\infty}},\label{104}\\
\|\omega(t)\|_{ L^1} = \|\omega_0\|_{L^1} ,\quad\|\omega (t)\|_{ L^{\infty}} = \|\omega_0\|_{L^{\infty}}.\label{105}
\end{gather} 

We recall that in the class of integrable and bounded solutions,  both the Navier--Stokes and   Euler equations admit a unique global solution in the two-dimensional setting. Indeed, the well-posedness of the Navier--Stokes equations holds true under more general assumptions, see, e.g., the work of Ben-Artzi \cite{Ben-Artzi94} for a proof in the $L^1$ setting. Roughly speaking, the results for \eqref{1} are a consequence of the parabolicity of the equation. In the case of the Euler equations, well-posedness for initial data in the class \eqref{6} was first obtained by Yudovich  \cite{Yudovich63} and is essentially open for unbounded vorticities.
Yudovich's result was later recovered by Loeper  \cite{Loeper06a}, who deduced uniqueness for the Euler equations from stability estimates in terms of the $2$-Wasserstein distance. In either work, the central key for proving uniqueness for \eqref{2} is a $\log$-Lipschitz estimate for the velocity field, which is valid in the Yudovich class $L^1\cap L^{\infty}$, namely
  \begin{equation}\label{4}
|u(x) - u(y)|\le C |x-y|\left(1+ \log\left(1+\frac1{|x-y|}\right)\right),
\end{equation}
see, e.g., \cite[Lemma 8.1]{MajdaBertozzi02}. In this estimate, the constant $C$ depends on $\|\omega\|_{L^1}$ and $\|\omega\|_{L^{\infty}}$.

In our derivation of the bounds on the convergence order \eqref{21} and \eqref{7}, we build up on  Loeper's approach. More precisely, we derive an estimate on certain  $2$-Wasserstein distances, which provides, on the one hand,  a bound on the $1$-Wasserstein distance between the viscous and the inviscid vorticity fields, and, on the other hand, the desired bound on the $L^2$ norm of the distance of the corresponding velocity fields. We first state and discuss the latter. A definition of Wasserstein distances will be given in the subsequent section.

\begin{theorem}\label{T2}
For $\nu\ll1 $ and $t\ll \frac1{|\log \nu|}$, it holds that
\[
\|\un(t)-u(t)\|_{L^2}\lesssim  \sqrt{\nu t} .
\]
Moreover, for  any  fixed $t>0$, it holds
\[
\|\un(t)-u(t)\|_{L^2} = O\left( \left(\frac{\nu}{|\log\nu|}\right)^{\frac12\exp(-Ct)}\right)
\]
as $\nu\to0$, where $C >0$ is a constant dependent only on $\|\omega_0\|_{L^1}$ and $\|\omega_0\|_{L^{\infty}}$. 
\end{theorem}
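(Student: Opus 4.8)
The plan is to follow Loeper's stability strategy, but to drive it by a stochastic coupling that absorbs the diffusion. Since the vorticities are signed, I first split $\omega_0 = \omega_0^+ - \omega_0^-$ into its nonnegative and nonpositive parts and, using the linearity of both evolution equations in the vorticity, write $\on = \rho^\nu_+ - \rho^\nu_-$ and $\omega = \rho_+ - \rho_-$, where $\rho^\nu_\pm$ solves the advection--diffusion equation \eqref{1} with velocity $\un$ and datum $\omega_0^\pm$, and $\rho_\pm$ solves the pure advection equation \eqref{2} with velocity $u$ and the same datum. Each piece is nonnegative with conserved mass $m_\pm = \|\omega_0^\pm\|_{L^1}$, and the maximum principle keeps $\|\rho^\nu_\pm(t)\|_{L^\infty},\|\rho_\pm(t)\|_{L^\infty}\le\|\omega_0\|_{L^\infty}$. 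Since $\un$ is smooth for $t>0$ by parabolic regularity and the Osgood (log-Lipschitz) bound \eqref{4} yields a well-defined characteristic flow for $u$, I may regard $\rho^\nu_\pm(t)/m_\pm$ as the law of the Itô diffusion $dX^\pm_t = \un(t,X^\pm_t)\,dt + \sqrt{2\nu}\,dB_t$ and $\rho_\pm(t)/m_\pm$ as the law of the characteristic flow $dY^\pm_t = u(t,Y^\pm_t)\,dt$, both started from the common random datum $X^\pm_0 = Y^\pm_0 \sim \omega_0^\pm/m_\pm$.

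The argument then rests on two ingredients: a comparison between velocities and vorticities, and a differential inequality for the transport cost. For the first I invoke Loeper's inequality: since $K = \grad^\perp\laplace^{-1}$, for nonnegative densities of equal mass one has $\|K\ast(\rho^\nu_\pm - \rho_\pm)\|_{L^2}\lesssim \|\omega_0\|_{L^\infty}^{1/2}\,W_2(\rho^\nu_\pm,\rho_\pm)$, so that, summing the two signs,
\[
\|\un(t) - u(t)\|_{L^2}\lesssim \sum_{\pm} W_2\big(\rho^\nu_\pm(t),\rho_\pm(t)\big) \lesssim \sum_\pm Q_\pm(t)^{1/2}, \qquad Q_\pm(t) := \E\,|X^\pm_t - Y^\pm_t|^2,
\]
the last step because $(X^\pm_t,Y^\pm_t)$ is an admissible coupling (mass-dependent constants absorbed into $\lesssim$). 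For the second I apply Itô's formula to $|X^\pm_t - Y^\pm_t|^2$: the martingale term has zero mean, the quadratic variation of the Brownian part contributes the constant $4\nu$, and the drift splits as $\un(t,X^\pm_t) - u(t,Y^\pm_t) = [\un(t,X^\pm_t) - u(t,X^\pm_t)] + [u(t,X^\pm_t) - u(t,Y^\pm_t)]$. The first bracket is handled by Cauchy--Schwarz together with $\E|\un - u|^2(X^\pm_t)\lesssim\|\omega_0\|_{L^\infty}\|\un - u\|_{L^2}^2$, which closes back onto $\sum_\pm Q_\pm$ through the previous display; the second bracket is controlled by the log-Lipschitz estimate \eqref{4}. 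Writing $Q = Q_+ + Q_-$ and using Jensen's inequality with a concave majorant of $r^2(1 + \log(1 + 1/r))$, I arrive at the closed differential inequality
\[
\dot Q \le C\,Q\left(1 + \log\left(1 + \tfrac1Q\right)\right) + C\nu, \qquad Q(0)=0.
\]

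It remains to integrate this inequality and feed the outcome back through Loeper's bound. While the forcing dominates, that is for $t\ll 1/|\log\nu|$, one has $Q(t)\lesssim \nu t$, which gives the short-time bound $\|\un(t)-u(t)\|_{L^2}\lesssim\sqrt{\nu t}$. Past the crossover time $t_\ast\asymp 1/|\log\nu|$, at which $Q(t_\ast)\asymp \nu/|\log\nu|$, the logarithmic term takes over; the substitution $Q = e^{-h}$ linearises the inequality to $\dot h\gtrsim -C(1+h)$, whence $1 + h(t)\ge (1 + h(t_\ast))\,e^{-C(t - t_\ast)}$ and therefore $Q(t)\lesssim (\nu/|\log\nu|)^{\exp(-Ct)}$ for fixed $t$ as $\nu\to0$. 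Taking square roots and applying Loeper's inequality once more yields the two asserted estimates. The crux of the proof---and the source of the logarithmic gain over Chemin---is precisely this final step: keeping the $\nu$ forcing alive through the initial linear phase pins the effective initial datum of the doubly-exponential regime at the scale $\nu/|\log\nu|$ rather than $\nu$. The main technical care therefore goes into the Jensen step, which must produce a coefficient in front of $Q\log(1/Q)$ that is \emph{uniform in} $\nu$, so that the loop through Loeper's inequality genuinely closes and the exponent in the final bound is not degraded.
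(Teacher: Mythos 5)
Your argument is correct, and it reaches the same differential inequality and the same two-regime integration as the paper; the only genuine difference is how the coupling is realized. The paper decomposes into signed parts exactly as you do and then works \emph{Eulerian}: it evolves a joint density $\eta_\pm\in\Pi(\omega_0^\pm,\omega_0^\pm)$ by the hypoelliptic equation \eqref{3} on $\R^2\times\R^2$, sets $Q_\pm=\iint|x-y|^2\eta_\pm\dx\dy$, and differentiates under the integral; you instead take the Lagrangian/probabilistic twin, coupling the It\^o diffusion driven by $\un$ to the Osgood flow of $u$ through a common initial point and setting $Q_\pm=\E|X_t^\pm-Y_t^\pm|^2$. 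These are literally the same object -- \eqref{3} is the forward equation of your pair $(Y_t,X_t)$ -- and the paper explicitly notes that it chooses the deterministic route ``instead of working with the stochastic flow.'' All the substantive estimates coincide: Loeper's bound \eqref{13} to convert $W_2$ into $\|\un-u\|_{L^2}$, the drift splitting with Cauchy--Schwarz against the marginal, the log-Lipschitz estimate \eqref{4} with Jensen (your concavity claim for $s\mapsto s\log(1+s^{-1/2})$ does check out and gives a $\nu$-independent constant, as you correctly insist it must), and the crossover analysis at $t_1\sim1/|\log\nu|$ with $Q(t_1)\sim\nu/|\log\nu|$ feeding the doubly exponential regime. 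What each version buys: the Eulerian formulation avoids any discussion of well-posedness of the SDE and of the flow of the merely log-Lipschitz field $u$, at the cost of an approximation step to make $\eta_0^\pm$ smooth with $Q(0)$ small but positive (cf.~\eqref{20}); your stochastic formulation gets $Q(0)=0$ for free from the diagonal coupling but should say a word about strong well-posedness of the diffusion (bounded drift plus additive noise, or parabolic smoothing of $\un$ for $t>0$) and about the integrability needed for the martingale term to have zero mean. Neither point is a gap, only a matter of which technicalities one prefers to discharge.
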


Here and in the following, we write $A\lesssim B$ if there exists a constant $\Lambda$ independent of $\nu$ and $t$ such that $A\le \Lambda B$. Moreover, we write $A\sim B$ if both $A\lesssim B$ and $B\lesssim A$. Finally, $A\ll B$ means that $A\le \Lambda B$ for some sufficiently large $\Lambda$.

As mentioned earlier, up to the logarithmic improvement, our second estimate  has been established earlier by Chemin \cite{Chemin96}. Our contribution here is essentially a new proof that is based on stability estimates. With regard to the scaling in $\nu$, the first estimate  is supposedly optimal, even globally in time.

The results can be restated as bounds on the  homogeneous $H^{-1}$ norm of the vorticity fields, for instance, 
\begin{equation}\label{8}
\|\on(t) - \omega(t)\|_{\dot H^{-1}} = O\left( \left(\frac{\nu}{|\log\nu|}\right)^{\frac12\exp(-Ct)}\right),
\end{equation}
and complement thus a recent work by Constantin, Drivas and Elgindi \cite{ConstantinDrivasElgindi19}, in which the convergence in  $L^p$  is proved for vorticity fields in the Yudovich class,
\[
\lim_{\nu\to 0} \sup_t \|\omega(t) - \omega(t)\|_{L^p}=0.
\]
Note that there can be no rates of strong convergence without imposing additional regularity assumptions on the data, which can be easily seen on the linear level, cf.~Example 2 in \cite{Seis18}.

The results of our second theorem are similar in spirit to \eqref{8}, in the sense that they provide  estimates on a negative Sobolev norm.

\begin{theorem}\label{T1}
For $\nu\ll1 $ and $t\ll \frac1{|\log \nu|}$, it holds that
\[
W_1(\on(t),\omega(t))\lesssim  \sqrt{\nu t} .
\]
Moreover, for  any  fixed $t>0$, it holds
\[
W_1(\on(t),\omega(t)) = O\left( \left(\frac{\nu}{|\log\nu|}\right)^{\frac12\exp(-Ct)}\right)
\]
as $\nu\to0$, where $C >0$ is a constant dependent only on $\|\omega_0\|_{L^1}$ and $\|\omega_0\|_{L^{\infty}}$. 
\end{theorem}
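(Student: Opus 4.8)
The plan is to follow Loeper's strategy and bound a quadratic transportation cost between the two vorticities by a probabilistic coupling of the underlying flows. Assuming first $\omega_0\ge0$ with unit mass, I represent the Euler vorticity as the law of the characteristic flow $Z_t$ solving $\dot Z_t=u(t,Z_t)$, and the Navier--Stokes vorticity as the law of the stochastic flow $X_t$ solving the It\^o equation $\dd X_t=\un(t,X_t)\dt+\sqrt{2\nu}\,\dd B_t$: since $\un$ is divergence free, the law of $X_t$ obeys exactly the Fokker--Planck equation \eqref{1}. Choosing the same random initial datum $X_0=Z_0\sim\omega_0$, the pair $(X_t,Z_t)$ is a coupling of $\on(t)$ and $\omega(t)$, so that, with $Q(t):=\E|X_t-Z_t|^2$, one has $W_1(\on(t),\omega(t))\le W_2(\on(t),\omega(t))\le\sqrt{Q(t)}$. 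For signed $\omega_0$ I would split $\omega_0=\omega_0^+-\omega_0^-$, run the coupling for each part against the same velocity fields, and recover the $W_1$ bound through Kantorovich--Rubinstein duality.

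Next I differentiate $Q$. By It\^o's formula the martingale part drops out and the quadratic variation of the noise contributes a constant, giving
\[
\frac{d}{dt}Q(t)=2\,\E\big[(X_t-Z_t)\cdot(\un(t,X_t)-u(t,Z_t))\big]+4\nu .
\]
I split the increment as $\un(t,X_t)-u(t,Z_t)=\big(\un(t,X_t)-u(t,X_t)\big)+\big(u(t,X_t)-u(t,Z_t)\big)$. The second bracket is estimated pointwise by the log-Lipschitz bound \eqref{4}; since $s\mapsto s\,(1+\log(1+s^{-1/2}))$ is concave near the origin, Jensen's inequality applied to $s=|X_t-Z_t|^2$ upgrades this to $\E\big[|X_t-Z_t|^2(1+\log(1+|X_t-Z_t|^{-1}))\big]\lesssim Q\,(1+\log(1/Q))$ for $Q$ small. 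The first bracket is the genuinely new term: by Cauchy--Schwarz and $\E|\un(t,X_t)-u(t,X_t)|^2=\int|\un-u|^2\on(t)\dx\le\|\omega_0\|_{\Li}\|\un-u\|_{L^2}^2$, together with Loeper's potential-theoretic estimate in the form $\|\un-u\|_{L^2}\lesssim W_2(\on,\omega)\le\sqrt{Q}$, this term is controlled by $Q$.

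Combining both contributions with the diffusive constant yields the closed differential inequality
\[
Q'(t)\lesssim Q(t)\log\frac1{Q(t)}+\nu,\qquad Q(0)=0 ,
\]
from which I read off the two regimes. For $t\ll1/|\log\nu|$ a bootstrap shows that the diffusive constant dominates, $Q(t)\lesssim\nu t$, giving the first assertion $W_1(\on(t),\omega(t))\lesssim\sqrt{\nu t}$. For larger times, once $Q\gtrsim\nu/|\log\nu|$ the term $Q\log(1/Q)$ absorbs the constant $\nu$, and integrating $Q'\lesssim Q\log(1/Q)$ starting from the value $Q\sim\nu/|\log\nu|$ reached at $t_0\sim1/|\log\nu|$ produces the double-exponential bound $Q(t)\lesssim(\nu/|\log\nu|)^{\exp(-Ct)}$; taking square roots gives the second assertion.

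The main obstacle is closing the estimate on the first velocity bracket: this is exactly the place where Loeper's estimate converts the $L^2$ velocity difference back into the transportation cost $\sqrt Q$, rendering the differential inequality self-contained and amenable to Gronwall-type integration. A secondary difficulty is the rigorous justification of the stochastic-flow representation and of the It\^o computation under merely log-Lipschitz drifts, where the Osgood condition still guarantees well-posedness, together with the bookkeeping required to pass from the coupling of the positive and negative parts to a genuine bound on the signed distance $W_1$.
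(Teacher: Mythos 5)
Your proposal is correct and follows essentially the same strategy as the paper: the same splitting into positive and negative parts, the same decomposition of the velocity increment, the same use of the log-Lipschitz bound with Jensen's inequality and of Loeper's $\dot H^{-1}$-versus-$W_2$ estimate to close the differential inequality $Q'\lesssim Q\left(1+\log\left(1+\tfrac1Q\right)\right)+\nu$, and the same two-regime integration yielding $\nu t$ for $t\ll 1/|\log\nu|$ and the double-exponential rate afterwards. The only difference is that you realize the coupling via the stochastic flow and It\^o's formula, whereas the paper deliberately replaces this Lagrangian construction by a deterministic one --- the hypoelliptic equation \eqref{3} for a joint density $\eta_{\pm}$ on $\R^2\times\R^2$ with marginals $\on_{\pm}$ and $\omega_{\pm}$ --- precisely to bypass the well-posedness and It\^o bookkeeping issues you flag as your secondary difficulty.
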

Notice that, by the Kantorovich--Rubinstein theorem, the $1$-Wasserstein distance $W_1$ is  dual to the homogeneous Lipschitz norm $\|\cdot \|_{\dot W^{1,\infty}}$, cf.~\eqref{9} below. Moreover, as Wasserstein distances metrize weak convergence, cf.\ \cite[Theorem 7.12]{Villani03}, estimates on the Wasserstein distance translate into  estimates on the convergence order. That is, our second theorem shows that the vorticity fields of the viscous fluid, $\on$, converge towards the vorticity field of the inviscid fluid, $\omega$, weakly with order $O\left( \left({\nu}/{|\log\nu|}\right)^{\frac12\exp(-Ct)}\right)$. Again, the author believes that $O(\sqrt{\nu})$ convergence is optimal.
 In this regard, the situation is very similar to the inviscid limit problem for  linear advection--diffusion equations in the DiPerna--Lions setting considered earlier by the author by using new stability estimates for the continuity equation, see \cite{Seis17,Seis18}. (In a certain sense, the estimates in \cite{Seis17} are the linear analogues of Loeper's estimates for the 2D Euler equations \cite{Loeper06a}.)

The remainder of the paper is organized as follows. In the following section, we recall the definition of the Wasserstein distances and collect a number of properties, that will be useful in our proofs. The proofs of Theorems \ref{T2} and \ref{T1} will be provided in the last section.

\section{Some tools from the theory of optimal transportation}
In this section, we collect definitions and properties of Wasserstein distances that will be used in the sequel. For a general comprehensive introduction into the topic of optimal transportation, we refer to Villani's popular monograph \cite{Villani03}.

 Given two nonnegative integrable functions $f$ and $g$ of the same total mass,
\begin{equation}\label{10}
\int_{\R^2} f\dx = \int_{\R^2} g\dx,
\end{equation}
we define the set of transport plans $\Pi(f,g)$ as the set of joint measures $\pi$ on the product space $\R^2\times\R^2$ having $f$ and $g$ as marginals, that is,
\[
\int_{\R^2\times\R^2}\left(\varphi(x) + \psi(y)\right)\,  \dd \pi(x,y) = \int_{\R^2} \varphi(x) f (x)\dx + \int_{\R^2}\psi(y)g(y)\dy,
\]
for any continuous functions $\varphi$ and $\psi$. The $p$-Wasserstein distance $W_p(f,g)$ between $f$ and $g$ is then defined by the formula
\[
W_p(f,g) = \left(\inf_{\pi \in \Pi(f,g)} \int_{\R^2\times\R^2} |x-y|^p\, \dd\pi(x,y)\right)^{\frac1p}.
\]
In this paper, we will consider the cases $p=1$ and $p=2$ only. Both are ordered in the sense that
\begin{equation}\label{12}
W_1(f,g) \le \|f\|_{L^1}^{\frac12} W_2(f,g)
\end{equation}
by Jensen's inequality, where we used the fact that $\pi[\R^2\times \R^2] = \|f\|_{L^1}$.

As a consequence of the Kantorovich--Rubinstein duality theorem  
\begin{equation}\label{9}
W_1(f,g) = \sup\left\{ \int_{\R^2} (f-g)\zeta\dx:\: \|\grad \zeta\|_{L^{\infty}}\le 1\right\},
\end{equation}
cf.~\cite[Theorem 1.14]{Villani03}, the $1$-Wasserstein distance is a transshipment cost that only sees the difference of the marginals, and thus, $W_1$ can be naturally extended as a measure on the space of not necessarily nonnegative configurations with same spatial average \eqref{10}. This is particularly convenient in our application to solutions to the Navier--Stokes equations, as these conserve the spatial average but not the $L^1$ norm. 

Concerning $W_2$, we will use the fact that the Wasserstein distance dominates the $\dot H^{-1}$ norm,
\begin{equation}\label{13}
\|f-g\|_{\dot H^{-1}} \le \max\{\|f\|_{L^{\infty}},\|g\|_{L^{\infty}}\}^{\frac12} W_2(f,g),
\end{equation}
cf.~\cite[Theorem 2.9]{Loeper06a}. 


%
%
%

\section{Proofs}
Due to the fact that the vorticity fields are not necessarily nonnegative and our argument is essentially based on $W_2$-distances, which are defined for nonnegative quantities only, we need the following construction.

For a general initial vorticity distribution $\omega_0$, we  consider separately the evolution of the positive and negative parts given by the linear equations
\begin{gather}
\partial_t \on_{\pm} + \un \cdot\grad \on_{\pm}   = \nu \laplace \on_{\pm},\quad \on_{\pm} (0)= \omega_0^{\pm},\label{101}\\
\partial_t \omega_{\pm} + u\cdot\grad\omega_{\pm} =0,\quad \omega_{\pm} (0)= \omega_0^{\pm},\label{102}
\end{gather}
where the superscript plus and minus signs indicate the positive and negative parts, i.e., $\omega_0^+ = \max\{0,\omega_0\}$ and $\omega_0^- = \max\{0,-\omega_0\}$, while the subscript plus and minus signs just mark the solutions. Notice that both equations are well-posed (and so is the hypoelliptic equation in \eqref{3}, which is a hybrid version of \eqref{101} and \eqref{102}) as a consequence of the DiPerna--Lions theory \cite{DiPernaLions89}, because $\grad u$ and $\grad \un$ belong both to the class $L^{\infty}(\R_+;L^p(\R^2)))$ for any $p\in [1,\infty)$ by the assumptions on the initial vorticity \eqref{6}, the a priori estimates in \eqref{104} and \eqref{105}, and Calder\'on--Zygmund theory.

By the maximum principles for the respective equations, the solutions are nonnegative. Moreover, as both equations are conservative thanks to the incompressibility condition $\div\un=\div u=0$, the total masses are preserved, $\|\on_\pm(t)\|_{L^1} = \|\omega_{\pm}(t)\|_{L^1} = \|\omega_0^{\pm}\|_{L^1}$. Finally, by uniqueness and linearity, it holds that $\on = \on_{+}-\on_-$ and $\omega = \omega_+-\omega_-$. We then have by the triangle inequality, \eqref{9} and \eqref{12} that
\begin{align*}
W_1(\on,\omega) &\le W_1(\on_+,\omega_+) + W_1(\on_-,\omega_-) \\
&\le \|\omega_0^+\|_{L^1}^{\frac12} W_2(\on_+,\omega_+) + \|\omega_0^-\|_{L^1}^{\frac12}W_2(\on_-,\omega_-)\\
&\le \|\omega_0\|_{L^1}^{\frac12}\left(W_2(\on_+,\omega_+)  +W_2(\on_-,\omega_-)\right).
\end{align*}
An analogous estimate holds true for the $\dot H^{-1}$ norm via \eqref{13}, namely
\[
\|\omega^\nu-\omega\|_{\dot H^{-1}} \le \|\omega_0\|_{L^{\infty}}^{\frac12}   \left(W_2(\on_+,\omega_+)  +W_2(\on_-,\omega_-)\right).
\]

Theorems \ref{T2} and \ref{T1} are thus  consequences of the following result.

\begin{theorem}\label{T3}
For $\nu\ll1 $ and $t\ll \frac1{|\log \nu|}$, it holds that
\[
\left(W_2(\on_+(t),\omega_+(t))  +W_2(\on_-(t),\omega_-(t))\right)\lesssim  \sqrt{\nu t} .
\]
Moreover, for  any  fixed $t>0$, it holds
\[
\left(W_2(\on_+(t),\omega_+(t))  +W_2(\on_-(t),\omega_-(t))\right) = O\left( \left(\frac{\nu}{|\log\nu|}\right)^{\frac12\exp(-Ct)}\right)
\]
as $\nu\to0$, where $C >0$ is a constant dependent only on $\|\omega_0\|_{L^1}$ and $\|\omega_0\|_{L^{\infty}}$. 
\end{theorem}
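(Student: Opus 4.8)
The plan is to represent each pair of solutions probabilistically and to track the evolution of a single coupling. Fix the sign, say $+$, and let $\xi$ be distributed according to the normalized measure $\omega_0^+/\|\omega_0^+\|_{L^1}$. I would introduce the stochastic characteristics $dX_t = \un(X_t,t)\,dt + \sqrt{2\nu}\,dB_t$ with $X_0=\xi$ for a standard planar Brownian motion $B$, together with the deterministic characteristics $dY_t = u(Y_t,t)\,dt$, $Y_0=\xi$, driven by the \emph{same} initial datum. Since $\un$ is divergence-free and smooth and $u$ is divergence-free and log-Lipschitz by \eqref{4}, both flows are well defined, and the associated Fokker--Planck and transport equations identify the laws of $X_t$ and $Y_t$ with $\on_+(t)/\|\omega_0^+\|_{L^1}$ and $\omega_+(t)/\|\omega_0^+\|_{L^1}$, respectively. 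The joint law of $(X_t,Y_t)$ is therefore, after rescaling, an admissible transport plan, so that
\begin{equation*}
W_2^2(\on_+(t),\omega_+(t)) \le \|\omega_0^+\|_{L^1}\,\E\big[|X_t-Y_t|^2\big],
\end{equation*}
and similarly for the $-$ part. It then suffices to bound the right-hand sides.

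Next I would differentiate. Writing $Z_t = X_t - Y_t$ and applying It\^o's formula, the martingale part drops out in expectation and the quadratic variation of the noise produces an additive term, giving
\begin{equation*}
\frac{d}{dt}\E\big[|Z_t|^2\big] = 2\,\E\big[Z_t\cdot(\un(X_t)-u(Y_t))\big] + 4\nu .
\end{equation*}
I would split the drift as $\un(X_t)-u(Y_t) = (\un(X_t)-u(X_t)) + (u(X_t)-u(Y_t))$ and estimate the two pieces differently. For the viscous piece I would use Cauchy--Schwarz together with the pointwise bound $\on_+\le\|\omega_0\|_{L^\infty}$ and the fact that the Biot--Savart law is an isometry from $\dot H^{-1}$ to $L^2$, so that $\|\un-u\|_{L^2}=\|\on-\omega\|_{\dot H^{-1}}$; feeding this through \eqref{13} and the splitting of $\on-\omega$ into positive and negative parts bounds this contribution by a constant multiple of $\big(W_2(\on_+,\omega_+)+W_2(\on_-,\omega_-)\big)\,\big(\E[|Z_t|^2]\big)^{1/2}$. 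For the advection piece I would insert the log-Lipschitz estimate \eqref{4}, which yields $|u(X_t)-u(Y_t)|\le C|Z_t|\,(1+\log(1+1/|Z_t|))$.

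Collecting both signs and setting $m(t) = \E[|X_t^+-Y_t^+|^2] + \E[|X_t^--Y_t^-|^2]$, the viscous contributions close up, since the Wasserstein distances are themselves controlled by $\sqrt{m}$ and thus contribute an $m$-term, and, after an application of Jensen's inequality exploiting the concavity of $s\mapsto s(1+\log(1+1/\sqrt s))$ for small $s$, I would arrive at the scalar differential inequality
\begin{equation*}
\dot m \le C\,m\Big(1+\log\big(1+\tfrac{1}{\sqrt m}\big)\Big) + C\nu, \qquad m(0)=0 .
\end{equation*}
The remaining work is to solve this Osgood-type inequality in two regimes. For $t\ll 1/|\log\nu|$ the forcing term dominates and a bootstrap gives $m\lesssim \nu t$, hence the first bound. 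For fixed $t$ the initial layer first drives $m$ up to size $m(t_*)\sim \nu/|\log\nu|$ at the threshold time $t_*\sim 1/|\log\nu|$ where the logarithmic term overtakes the forcing; thereafter the substitution $\phi = \log(1/m)$ linearizes the inequality into $\dot\phi\ge -C(1+\phi)$, yielding $\phi(t)\gtrsim |\log\nu|\,e^{-Ct}$ and therefore $m(t)\lesssim (\nu/|\log\nu|)^{e^{-Ct}}$.

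I expect the main obstacle to be twofold: closing the coupled differential inequality, i.e.\ ensuring that the viscous cross term — which is itself controlled by the very Wasserstein distances we are estimating — is genuinely absorbed by the logarithmic Gronwall term rather than creating a vicious circle; and carrying out the initial-layer analysis precisely enough to produce the factor $\nu/|\log\nu|$ rather than $\nu$ at the threshold time, since it is exactly this refinement that yields the logarithmic improvement over Chemin's bound. The concavity needed for Jensen's inequality holds only on a bounded range of $s$ and would have to be justified through a concave majorant.
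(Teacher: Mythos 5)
Your proposal is correct and is, at bottom, the same coupling argument as the paper's, written in Lagrangian/probabilistic rather than Eulerian language. The paper explicitly chooses to avoid the stochastic flow: instead of the synchronous coupling $(Y_t,X_t)$ of the ODE and the SDE, it evolves a density $\eta_{\pm}$ on $\R^2\times\R^2$ by the hypoelliptic equation \eqref{3} starting from a transport plan $\eta_0^{\pm}\in\Pi(\omega_0^{\pm},\omega_0^{\pm})$; your joint law of $(Y_t,X_t)$ is precisely such a solution with the diagonal initial coupling, and your $m(t)$ is the paper's $Q(t)$ up to normalization by $\|\omega_0^{\pm}\|_{L^1}$. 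From there the two proofs coincide step by step: the same splitting of the drift into an advective part (log-Lipschitz bound \eqref{4} plus Jensen) and a viscous part (Cauchy--Schwarz, the marginal/law identification, $\|\un-u\|_{L^2}=\|\on-\omega\|_{\dot H^{-1}}$ and \eqref{13}), the same closed differential inequality, and the same two-regime integration. Your two worries largely evaporate: the viscous cross term contributes exactly $O(m)$ and causes no vicious circle, since $W_2^2(\on_{\pm},\omega_{\pm})\le\|\omega_0^{\pm}\|_{L^1}\,m$ by construction of the coupling; and $s\mapsto s\log\bigl(1+1/\sqrt{s}\bigr)$ is in fact concave on all of $(0,\infty)$, so no concave majorant is needed. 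What each formulation buys: yours gets $m(0)=0$ for free from the diagonal coupling (the paper instead assumes \eqref{20} and optimizes over $\eta_0^{\pm}$ at the very end), while the Eulerian version avoids justifying strong well-posedness of the SDE and the application of It\^o's formula for a drift that is merely log-Lipschitz at $t=0$, replacing this by DiPerna--Lions well-posedness of \eqref{3}. The one step you should write out carefully is the initial-layer matching $t_{*}\sim 1/|\log\nu|$ and $m(t_{*})\sim\nu/|\log\nu|$, which the paper extracts from the balance $\nu t_1\bigl(1+\log\bigl(1+\tfrac{1}{\nu t_1}\bigr)\bigr)\sim\nu$; this is exactly where the logarithmic gain over Chemin's bound is produced.
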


We now turn to the proof of Theorem \ref{T3}, in which we roughly follow and extend  Loeper's stability estimate for the Euler equations \cite{Loeper06a}. Loeper's proof is  based on the Lagrangian formulation of the advection equation \eqref{2}. Its viscous version leads to the stochastic differential equation
\[
\dd X_t(x) = \un(t,X_t(x))\dt + \sqrt{2\nu} \dd W_t, \quad X_t(x)=x,
\]
and the Lagrangian representation of the vorticity, $\on(t) = \E[\omega_0\circ X_t^{-1}]$. Instead of working with the stochastic flow, we propose a deterministic (or Eulerian) derivation of the stability-type estimate   in Theorem \ref{T3} via the coupling method, see, e.g., \cite{FournierPerthame19}. For that purpose, we choose  functions $\eta_0^{\pm} \in \Pi(\omega_0^{\pm},\omega_0^{\pm})$ and consider the linear hypoelliptic advection-diffusion equations
\begin{equation}\label{3}
\partial_t \eta_{\pm} + u(x)\cdot \grad_x\eta_{\pm} + \un(y)\cdot \grad_y \eta_{\pm} = \nu \laplace_y \eta_{\pm}
\end{equation}
 on the product space $\R^2\times\R^2$, with initial conditions $\eta_{\pm}(0) = \eta_0^{\pm}$. By construction, the marginals of $\eta_{\pm}$ coincide with the unique solutions of equations \eqref{101} and \eqref{102},
  \[
 \int_{\R^2}\eta_{\pm}(t,x,y)\, \dd x = \on_{\pm}(t,y),\quad \int_{\R^2} \eta_{\pm}(t,x,y)\, \dd y = \omega_{\pm}(t,x),
 \]
and thus, by definition of the Wasserstein distance, 
\begin{equation}\label{14}
W_2^2(\on_{\pm}(t),\omega_{\pm}(t)) \le \iint_{\R^2\times \R^2} |x-y|^2\eta_{\pm}(t,x,y)\dx\dy =:Q_{\pm}(t).
\end{equation}
We also set
\[
Q(t) = Q_+(t) + Q_-(t).
\]

By standard approximation procedures, we may assume that $\eta_0^{\pm}$ is smooth and compactly supported so that $Q_{\pm}(0)$ is finite, and that $Q_{\pm}(t)$ is smooth. In fact, since the Wasserstein distance between the Navier--Stokes and Euler vorticities is initially vanishing by \eqref{6}, we may choose $\eta_0$ such that $Q(0)$ is arbitrarily small, say
\begin{equation}\label{20}
Q(0)\left( 1+\log \left(1+\frac1{Q(0)}\right)\right) < \nu.
\end{equation}

Our first goal is the following differential inequality.

\begin{lemma}\label{L1}
It holds that
\begin{equation}\label{5}
\frac{\dd Q}{\dd t} \lesssim Q\left( 1+\log \left(1+\frac1Q\right)\right) + \nu.
\end{equation}
\end{lemma}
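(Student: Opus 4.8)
The plan is to differentiate $Q_\pm(t)$ in time, substitute the hypoelliptic equation \eqref{3}, and integrate by parts so as to exploit the incompressibility of both velocity fields. Writing $\del_t\eta_\pm = -u(x)\cdot\grad_x\eta_\pm - \un(y)\cdot\grad_y\eta_\pm + \nu\laplace_y\eta_\pm$ and using $\grad_x\cdot u = \grad_y\cdot\un = 0$, the two transport terms contribute $\grad_x(|x-y|^2)\cdot u(x) = 2(x-y)\cdot u(x)$ and $\grad_y(|x-y|^2)\cdot \un(y) = -2(x-y)\cdot \un(y)$, while the diffusion term yields $\nu\laplace_y|x-y|^2 = 4\nu$. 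Since $\eta_\pm$ has total mass $\|\omega_0^\pm\|_{L^1}$, this produces
\[
\frac{\dd Q_\pm}{\dd t} = 2\iint_{\R^2\times\R^2} (x-y)\cdot\bigl(u(x) - \un(y)\bigr)\,\eta_\pm\dx\dy + 4\nu\|\omega_0^\pm\|_{L^1}.
\]
The boundary terms vanish and these manipulations are legitimate by the smoothness and decay assumptions on $\eta_\pm$. The last term already accounts for the $+\nu$ in \eqref{5}, because $\|\omega_0^\pm\|_{L^1}\le\|\omega_0\|_{L^1}$.

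The heart of the matter is to control the integrand through the splitting $u(x) - \un(y) = \bigl(u(x) - u(y)\bigr) + \bigl(u(y) - \un(y)\bigr)$. For the first difference I would invoke the log-Lipschitz estimate \eqref{4} together with Cauchy--Schwarz on the scalar product, giving $|(x-y)\cdot(u(x)-u(y))| \le C|x-y|^2\bigl(1+\log(1+1/|x-y|)\bigr)$. Setting $r=|x-y|^2$, the right-hand side is $C\,\beta(r)$ with $\beta(r)=r\bigl(1+\log(1+r^{-1/2})\bigr)$, which (up to constants) is an increasing concave function vanishing at the origin. Applying Jensen's inequality to the probability measure $\eta_\pm/\|\omega_0^\pm\|_{L^1}$ then moves the nonlinearity outside the integral and, after absorbing the fixed mass $\|\omega_0^\pm\|_{L^1}\le\|\omega_0\|_{L^1}$, yields a bound $\lesssim Q_\pm\bigl(1+\log(1+1/Q_\pm)\bigr)$, matching the first term of \eqref{5}.

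For the second difference, which encodes the genuine discrepancy between the viscous and inviscid flows, I would apply Cauchy--Schwarz to obtain the factor $2Q_\pm^{1/2}\bigl(\iint |u(y)-\un(y)|^2\eta_\pm\bigr)^{1/2}$. Integrating out the $x$-variable collapses the inner integral onto the $y$-marginal $\on_\pm$, so that $\iint |u-\un|^2\eta_\pm\dx\dy = \int |u-\un|^2\on_\pm\dy \le \|\omega_0\|_{L^\infty}\|u-\un\|_{L^2}^2$ by the maximum principle $\|\on_\pm(t)\|_{L^\infty}\le\|\omega_0\|_{L^\infty}$. The crucial, self-consistent step is that the velocity difference is controlled by $Q$ itself: since $\|u-\un\|_{L^2} = \|\omega-\on\|_{\dot H^{-1}}$ by the Biot--Savart law, the $\dot H^{-1}$ bound derived above from \eqref{13} gives $\|u-\un\|_{L^2}\lesssim\|\omega_0\|_{L^\infty}^{1/2}\bigl(Q_+^{1/2}+Q_-^{1/2}\bigr)\lesssim\sqrt{Q}$. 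Hence this contribution is $\lesssim Q_\pm^{1/2}\sqrt{Q}\le Q$.

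Summing the estimates for $Q_+$ and $Q_-$ and using that $r\mapsto r\bigl(1+\log(1+1/r)\bigr)$ is increasing, so that $Q_\pm\bigl(1+\log(1+1/Q_\pm)\bigr)\le Q\bigl(1+\log(1+1/Q)\bigr)$, then gives \eqref{5}. I expect the main obstacle to be the closing of the cross term: a naive bound would leave $\|u-\un\|_{L^2}$ uncontrolled, and the argument only closes because \eqref{13} feeds the $W_2$-distances --- and therefore $\sqrt{Q}$ --- back into the estimate. This bootstrap is precisely the mechanism behind Loeper's coupling argument; verifying the concavity underlying the Jensen step is the secondary point requiring care.
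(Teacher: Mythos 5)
Your proof is correct and follows essentially the same route as the paper: differentiate $Q_\pm$, split $u(x)-\un(y)=(u(x)-u(y))+(u(y)-\un(y))$, control the first piece by the log-Lipschitz bound \eqref{4} plus Jensen for a concave function of $|x-y|$, and close the second piece via Cauchy--Schwarz, the marginal condition, and the $\dot H^{-1}$--$W_2$ comparison \eqref{13}, which is exactly the bootstrap the paper uses. The only (immaterial) differences are the constant in the diffusion term and your single-step Jensen applied to $\beta(r)=r\bigl(1+\log(1+r^{-1/2})\bigr)$ with the measure $\eta_\pm/\|\omega_0^\pm\|_{L^1}$, where the paper instead inserts the intermediate first moment $\iint|x-y|\eta_\pm$; both yield the same bound.
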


\begin{proof}In the following computation, we neglect the time dependences of the involved functions.  We first derive simultaneously estimates on the rates of change of $Q_+$ and $Q_-$. Differentiation,  (multiple) integration by parts, and the fact that the velocity fields are both divergence-free yield 
\begin{align*}
\frac{\dd Q_{\pm}}{\dd t} & = 2 \iint_{\R^2\times\R^2} (x-y)\cdot\left(u(x)-\un(y)\right)\eta_{\pm}(x,y)\dx\dy + 2\nu\iint_{\R^2\times\R^2} \eta_{\pm}(x,y)\dx\dy.
\end{align*}
Because \eqref{3} can be put  in conservation form (because $u$ and $\un$ are divergence-free), and in view of the marginal conditions for $\eta_0^{\pm}$, we notice that
\[
\iint_{\R^2\times\R^2} \eta_{\pm}(x,y)\dx\dy = \iint_{\R^2\times\R^2} \eta_0^{\pm}(x,y)\dx\dy = \|\omega^{\pm}_0\|_{L^1}.
\]
We thus have and write
\begin{align*}
\frac{\dd Q_{\pm}}{\dd t} & = 2\iint_{\R^2\times\R^2 }(x-y)\cdot \left(u(x) - u(y)\right)\eta_{\pm}(x,y)\dx\dy \\
&\qquad+2\iint_{\R^2\times\R^2 }(x-y)\cdot \left(u(y) - \un(y)\right)\eta_{\pm}(x,y)\dx\dy + 2\nu \|\omega_0^{\pm}\|_{L^1}\\
&=: I_{\pm}^1 + I_{\pm}^2 + 2\nu \|\omega^{\pm}_0\|_{L^1}.
\end{align*}

In order to estimate the first integral term, we use the $\log$-Lipschitz estimate \eqref{4} for the velocity field,
\[
|I_{\pm}^1| \lesssim Q_{\pm} + \iint_{\R^2\times\R^2} |x-y|^2\log\left(1+\frac1{|x-y|}\right)\eta_{\pm}(x,y)\dx\dy.
\]
Because $s\mapsto s\log\left(1+\frac1s\right)$ is concave, we furthermore have with Jensen's inequality
\begin{align*}
\MoveEqLeft{\iint_{\R^2\times\R^2} |x-y|^2\log\left(1+\frac1{|x-y|}\right)\eta_{\pm}(x,y)\dx\dy}\\
 &\lesssim \left( \iint_{\R^2\times\R^2} |x-y|^2 \eta_{\pm}(x,y)\dx\dy\right) \log\left(1+ \frac{\iint_{\R^2\times\R^2} |x-y|\eta_{\pm}(x,y)\dx\dy}{\iint_{\R^2\times\R^2} |x-y|^2\eta_{\pm}(x,y) \dx\dy}\right)\\
&\le Q_{\pm} \log\left(1+ \left(\frac{\|\omega_0^{\pm}\|_{L^1}}{Q_{\pm}}\right)^{\frac12}\right)\\
&\lesssim Q_{\pm}\left(1+ \log\left(1+\frac1Q_{\pm}\right)\right).
\end{align*}
It thus follows that
\[
I_{\pm}^1\lesssim Q_{\pm}\left(1+ \log\left(1+\frac1Q_{\pm}\right)\right).
\]

For the second integral term, we use the Cauchy--Schwarz inequality and the marginal condition for $\eta_{\pm}$,
\begin{align*}
|I_{\pm}^2|  & \le \left(\iint_{\R^2\times\R^2} |x-y|^2 \eta_{\pm}(x,y)\dx\dy\right)^{\frac12} \left(\iint_{\R^2\times\R^2} |u(y)-\un(y)|^2 \eta_{\pm}(x,y)\dx\dy\right)^{\frac12}\\
& = Q_{\pm}^{\frac12} \left( \int_{\R^2} |u(y)-\un(y)|^2 \on_{\pm}(y)\dy\right)^{\frac12}\\
&\le Q_{\pm}^{\frac12} \|\on_{\pm}\|_{L^{\infty}}^{\frac12}\|\un-u\|_{L^2}.
\end{align*}
It remains to notice that $\|\on_{\pm}\|_{L^{\infty}}\le \|\omega_0\|_{L^{\infty}}$. Moreover, the $L^2$ norm of the velocity difference is the $\dot H^{-1}$ norm of the vorticity difference, which is bounded by the $2$-Wasserstein distance, cf.~\eqref{13}. Hence, via the triangle inequality,
\[
|I_{\pm}^2| \lesssim Q_{\pm}^{\frac12} \left(W_2(\omega_+,\on_+) +W_2(\omega_-,\on_-)\right)\le Q_{\pm}^{\frac12}Q^{\frac12}.
\]

Combining the previous estimates yields
\[
\frac{\dd Q}{\dt} \lesssim Q_{+}^{\frac12}Q^{\frac12}+Q_{-}^{\frac12}Q^{\frac12} + Q_{+}\left(1+ \log\left(1+\frac1Q_{+}\right)\right)+Q_{-}\left(1+ \log\left(1+\frac1Q_{-}\right)\right) +\nu,
\]
which implies the statement of the lemma, because $s\mapsto s\left(1+\log\left(1+\frac1s\right)\right)$ is an increasing function on $\R_+$.
\end{proof}

It remains to integrate the differential inequality \eqref{5}.

\begin{lemma}\label{L2}
For $\nu\ll1 $ and $t\ll \frac1{|\log \nu|}$, it holds that
\[
Q(t) \lesssim  Q(0)+ \nu t .
\]
Moreover, for  any  fixed $t>0$, it holds
\[
Q(t)  = O\left( \left(Q(0)+ \frac{\nu}{|\log\nu|}\right)^{\exp(-Ct)}\right)
\]
as $\nu\to0$, where $C >0$ is a constant dependent only on $\|\omega_0\|_{L^1}$ and $\|\omega_0\|_{L^{\infty}}$. 
\end{lemma}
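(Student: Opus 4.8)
The plan is to integrate the differential inequality \eqref{5} by absorbing the viscous forcing $\nu$ into a modified initial datum. Throughout, write $\beta(s) = s\left(1 + \log\left(1 + \frac1s\right)\right)$ for the increasing function on the right-hand side of \eqref{5}, and let $\Lambda$ denote the implicit constant, so that $\frac{\dd Q}{\dd t} \le \Lambda(\beta(Q) + \nu)$. The elementary fact driving both estimates is that $\beta^{-1}(\nu) \sim \nu/|\log\nu|$ as $\nu \to 0$, which I would check directly by inserting $s = \nu/|\log\nu|$ into $\beta$ and noting $1 + \log(1 + |\log\nu|/\nu) \sim |\log\nu|$. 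Set $\eps_\nu := \beta^{-1}(\nu)$, so that $\nu = \beta(\eps_\nu)$.

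For the short-time estimate I would run a continuation argument. Let $T$ be the largest time up to which $Q \le \eps_\nu$; since $\beta(Q(0)) < \nu$ by \eqref{20}, we have $Q(0) < \eps_\nu$ and hence $T > 0$. On $[0,T]$ monotonicity of $\beta$ gives $\beta(Q) \le \beta(\eps_\nu) = \nu$, so \eqref{5} collapses to $\frac{\dd Q}{\dd t} \le 2\Lambda\nu$, and integration yields $Q(t) \le Q(0) + 2\Lambda\nu t$. Evaluating at $t = T$ (where $Q(T) = \eps_\nu$ if $T$ is finite) forces $T \ge (\eps_\nu - Q(0))/(2\Lambda\nu)$; since we are free to choose $Q(0)$ as small as we wish, say $Q(0) \le \tfrac12\eps_\nu$, and since $\eps_\nu/\nu \sim 1/|\log\nu|$, this gives $T \gtrsim 1/|\log\nu|$. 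Consequently the linear bound $Q(t) \lesssim Q(0) + \nu t$ persists on the whole range $t \ll 1/|\log\nu|$.

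For the fixed-time estimate I would fold the forcing into the initial condition and integrate the resulting autonomous Osgood inequality. Using monotonicity of $\beta$ in the form $\beta(Q) + \beta(\eps_\nu) \le 2\beta(Q + \eps_\nu)$, the inequality \eqref{5} becomes $\frac{\dd P}{\dd t} \le 2\Lambda\beta(P)$ for $P := Q + \eps_\nu$, with $P(0) = Q(0) + \eps_\nu$. The key analytic step is to integrate by comparison with $F(P) = \log\left(1 + \log\left(1 + \frac1P\right)\right)$, whose derivative satisfies $-F'(P) = \frac{1}{(1+P)\beta(P)} \le \frac{1}{\beta(P)}$. Separating variables gives $\int_{P(0)}^{P(t)} \frac{\dd P}{\beta(P)} \le 2\Lambda t$, and bounding the integrand from below by $-F'(P)$ turns this into $F(P(0)) - F(P(t)) \le 2\Lambda t$. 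Undoing the two logarithms produces $\log\left(1 + \frac1{P(t)}\right) \ge \left(1 + \log\left(1 + \frac1{P(0)}\right)\right)e^{-2\Lambda t} - 1$, and for fixed $t$ with $\nu \to 0$ the right-hand side tends to $+\infty$, so a final exponentiation yields $P(t) \lesssim P(0)^{\exp(-2\Lambda t)}$; this is the claim with $C = 2\Lambda$ after recalling $P(0) \sim Q(0) + \nu/|\log\nu|$ and $Q \le P$.

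The main obstacle is the last paragraph: carrying the two exponentiations through cleanly, controlling the additive constants (the $-1$ and the factor $e^{-2\Lambda t}$ multiplying the $+1$) and confirming that $\exp(-|\log P(0)|\,e^{-2\Lambda t}) = P(0)^{\exp(-2\Lambda t)}$, so that the double exponential collapses to the stated power law. The only places where the freedom to shrink $Q(0)$ and the precise asymptotics $\beta^{-1}(\nu) \sim \nu/|\log\nu|$ are genuinely needed are in pinning down the two regimes; everything else reduces to monotonicity and comparison.
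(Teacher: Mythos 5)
Your proof is correct, and while it rests on the same two pillars as the paper's argument---a linear-growth regime where the forcing $\nu$ dominates, followed by an Osgood-type integration of $Q'\lesssim Q(1+\log(1+1/Q))$---the packaging of the second estimate is genuinely different. The paper first replaces $Q$ by $Q+\nu t$ to enforce $Q(t)\ge\nu t$, identifies the crossover time $t_1$ implicitly through the relation $\nu t_1(1+\log(1+\tfrac{1}{\nu t_1}))\sim\nu$ (extracting $t_1\sim1/|\log\nu|$ from it), and then runs the log-Gronwall bound on $[t_1,t]$ starting from $Q(t_1)\sim\nu/|\log\nu|$, discarding the case $\beta(Q)<\nu$ with the remark that $Q$ would then have to decrease. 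You instead compute the threshold explicitly as $\eps_\nu=\beta^{-1}(\nu)\sim\nu/|\log\nu|$, absorb the forcing by the constant shift $P=Q+\eps_\nu$ (using $\beta(Q)+\beta(\eps_\nu)\le2\beta(Q+\eps_\nu)$), and integrate the autonomous inequality $P'\le2\Lambda\beta(P)$ on all of $[0,t]$ with the exact antiderivative $F(P)=\log(1+\log(1+1/P))$. This buys you two things: you avoid the slightly informal dichotomy at $t_1$ (your monotone comparison handles excursions of $Q$ below the threshold automatically), and the crossover scale $\nu/|\log\nu|$ appears by direct computation rather than through an implicit relation. The price is the bookkeeping in the final double exponentiation, which you correctly identify and which does close: for fixed $t$ the additive constants are dominated as $P(0)\to0$, and raising the implicit constant in $P(0)\sim Q(0)+\nu/|\log\nu|$ to a power $\exp(-2\Lambda t)\le1$ is harmless. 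Your continuation argument for the first estimate is likewise a clean equivalent of the paper's $t_1$ discussion; note only that $Q(0)\le\tfrac12\eps_\nu$ is a (permissible) strengthening of \eqref{20}, justified exactly as the paper's own assumption $Q(0)\le\nu t_1$ is, by the freedom to choose $\eta_0^\pm$.
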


\begin{proof}The argument is very elementary. We provide it for the convenience of the reader.

We remark that by replacing $Q(t)$ by $Q(t)+\nu t$, which satisfies the same differential inequality \eqref{5} because $s\mapsto s\left(1+\log\left(1+\frac1s\right)\right)$ is an increasing function, we may without loss of generality assume that
\begin{equation}\label{100}
Q(t)\ge \nu t.
\end{equation}

As a consequence of  \eqref{20}, there exists a time $t_1$ up to which  $Q(t)$ is  small in the sense that 
\[
Q\left(1+\log\left(1+ \frac1Q\right)\right) \le \nu ,
\]
and thus, \eqref{5} reduces to
\[
\frac{\dd Q}{\dd t} \lesssim \nu,
\]
which yields that
\[
Q(t)\lesssim Q(0) +  \nu t\quad \mbox{for }t\in[0,t_1].
\]
This already proves the first statement. We only have to give an estimate on $t_1$. For this, we notice that we  may furthermore assume that $Q(0) \le \nu t_1$, so that $Q(t_1)\sim \nu t_1$ thanks to \eqref{100}. Then, the final time $t_1$ is given  by the estimate
\begin{equation}\label{103}
\nu t_1 \left(1+ \log\left(1+ \frac1{\nu t_1}\right)\right) \sim \nu.
\end{equation}
For $\nu\ll1 $, this relation and,  more precisely, the behavior of the function on the left-hand side enforce $\nu t_1\ll1$. In particular, \eqref{103} can be restated as
\[
\frac1{t_1} \sim \log \frac1{\nu t_1},
\]
which entails that $t_1\ll1$. Therefore, $1/t_1$ is much larger than its logarithm, and, thus, by the product rule for the logarithm, we must have
\[
\frac1{t_1}\sim \log\frac1{\nu}.
\]

We now turn to the second estimate of Lemma \ref{L2}. Since we are interested in an asymptotic statement for fixed times, we may always assume that $Q<1$. Moreover, it is enough to consider the case
\[
Q\left(1+\log\left(1+ \frac1Q\right)\right) \ge \nu 
\]
for $t\ge t_1$, because otherwise $Q$ would have to decrease in time. In this situation, \eqref{5} simplifies to
\[
\frac{\dd Q}{\dd t} \lesssim Q\left(1+\log \frac1Q\right) .
\]
This differential inequality can be rewritten as
$
C\log Q+ \frac{\dd }{\dd t}\log Q \le C,
$
for some $C>0$, and thus
$
\frac{\dd}{\dd t} \left(e^{C t} \log Q\right) \le Ce^{Ct}.
$
 A short computation reveals that
\[
Q(t) \lesssim Q(t_1)^{\exp(C(t_1-t))}\lesssim \left(\frac{\nu}{\log\frac1{\nu}}+Q(0)\right)^{\exp(-Ct)},
\]
where we have used the above estimate on $Q(t_1)$ and the estimate for $t_1$.
%
\end{proof}

\begin{proof}[Proof of Theorem \ref{T3}]
From   \eqref{14} and Lemma \ref{L2}, we deduce that
\[
W_2^2(\on_+(t),\omega_+(t))+W_2^2(\on_-(t),\omega_-(t)) \lesssim Q(0)+ \nu t
\]
for times $t\ll \frac1{|\log\nu|}$. It remains to notice that we can set $Q(0)=0$ by optimizing over $\eta_0^{\pm}\in \Pi(\omega_0^{\pm},\omega_0^{\pm})$. This concludes the proof of the first statement. The second one follows analogously.
\end{proof}

\section*{Acknowledgement} 
This work is funded by the Deutsche Forschungsgemeinschaft (DFG, German Research Foundation) under Germany's Excellence Strategy EXC 2044 --390685587, Mathematics M\"unster: Dynamics--Geometry--Structure. The author thanks Tarek Elgindi for pointing out an error in the first draft of this article.

\bibliography{euler}
\bibliographystyle{acm}

\flushleft{\vspace{5em}
\small Christian Seis\\
Institut f\"ur Analysis und Numerik,  Westf\"alische Wilhelms-Universit\"at M\"unster,\\
Orl\'eans-Ring 10, 48149 M\"unster, Germany.\\
\emph{E-mail address}: \texttt{seis@wwu.de}
}
\end{document}